\documentclass{aims}
\usepackage{amsmath}
  \usepackage{paralist}
  \usepackage{graphics} 
  \usepackage{epsfig} 
\usepackage{graphicx}  \usepackage{epstopdf}
 \usepackage[colorlinks=true]{hyperref}
\usepackage[english]{babel}
%
\usepackage{xcolor,colortbl}
\usepackage{braket}

\def\F{\mathbf F}

\def\cA{\mathcal A}
\def\cB{\mathcal B}
\def\cC{\mathcal C}

\def\cK{\mathcal K}
\def\cL{\mathcal L}

\def\Im{\mbox{\rm Im}}

\def\Alt{\mbox{\rm Alt}}
\def\Sym{\mbox{\rm Sym}}
\def\dim{\mbox{\rm dim}}

\def\FF{{\mathbb F}}

\def\f2{{\mathbb F}_{2}}


\newcommand{\GL}{\mbox{\rm GL}}



\newcommand{\gd}{\delta}

\newcommand{\gl}{\lambda}

\newcommand{\gr}{\rho}
\newcommand{\gs}{\sigma}

\newcommand{\gt}{\tau}


\def\Im{\mathrm{Im}}




\def\F2{\mathbb{F}_{\hspace{-0.7mm}2}}

\hypersetup{urlcolor=blue, citecolor=red}

  \textheight=8.2 true in
   \textwidth=5.0 true in
    \topmargin 30pt
     \setcounter{page}{1}



\newtheorem{theorem}{Theorem}[section]
\newtheorem{corollary}{Corollary}

\newtheorem{lemma}[theorem]{Lemma}
\newtheorem{proposition}{Proposition}

\theoremstyle{definition}
\newtheorem{definition}[theorem]{Definition}
\newtheorem{remark}{Remark}

\title[A note on some algebraic trapdoors for block ciphers] 
      {A note on some algebraic trapdoors for block ciphers}

\author[Marco Calderini]{}

\subjclass{Primary: 94A60, 20B15; Secondary: 20B35.}
 \keywords{Cryptography, primitive group, block cipher, trapdoors, group generated by round functions.}

 \email{marco.calderini@uib.no}



\begin{document}
\maketitle

\centerline{\scshape Marco Calderini}
\medskip
{\footnotesize
 \centerline{Department of Informatics, University of Bergen, Norway}
} 

%

\bigskip

 \centerline{
 }

\begin{abstract}
We provide sufficient conditions to guarantee that a translation based cipher is not vulnerable with respect to the partition-based trapdoor. This trapdoor has been introduced, recently, by Bannier et al. (2016) and it generalizes that introduced by Paterson in 1999. Moreover, we discuss the fact that studying the group generated by the round functions of a block cipher may not be sufficient to guarantee security against these trapdoors for the cipher.
\end{abstract}

\section{Introduction}

In the last years, since the work \cite{coppersmith1975generators} of Coppersmith and Grossman, much attention has been devoted to the group generated by the round functions of a block cipher. In this context, Paterson \cite{CGC-cry-art-paterson1} showed that the imprimitivity of the group can be exploited to construct a trapdoor. By a trapdoor we mean a hidden algebraic structure in the cipher design that would allow an attacker (with the knowledge of the trapdoor) to break it easily. In \cite{CGC-cry-art-carantisalaImp} Caranti, Dalla Volta and Sala introduced the definition of translation based cipher, which contains well-known ciphers like AES \cite{daemen2002design}, SERPENT \cite{CGC-cry-art-serpent} and PRESENT \cite{CGC-cry-art-PRESENT}. For this class of ciphers, in \cite{CGC-cry-art-carantisalaImp} and \cite{calderinisalerno}, the authors provided cryptographic conditions on the S-Boxes and the mixing layer, in order to guarantee the primitivity of the group generated by the round functions of the cipher.

In a recent work \cite{bannier2016partition}, inspired by the partition cryptanalysis developed in \cite{harpes1997partitioning}, the authors introduce the partition-based trapdoor. This type of trapdoor generalizes that introduced by Paterson. Moreover, the authors give an example of a (toy) block cipher which is not vulnerable with respect to linear \cite{linear} and differential attacks \cite{CGC2-cry-art-biham1991differential}, but that can be broken, easily, using the structure of the trapdoor. {A more sophisticated way to use such a weakness is given in \cite{bannier2017partition}}.

{The principal aim of this work is to investigate an open question left by Paterson in his work \cite{CGC-cry-art-paterson1}, that is, if it might be possible to have the case where the round functions generate a primitive group but the subgroup generated by the $\ell$-round cipher itself has a block structure. In particular, we want to find cryptographic properties that could avoid such a threat.}

In this work, we give some conditions on the S-boxes and the mixing layer of a translation based cipher, in order to avoid the partition-based trapdoor. From this result, we are able to give a security proof for the group of encryption functions of a cipher with independent round-keys. 


The paper is organized as follows. In Section 2, we recall some definitions and a series of properties and already known results. In Section 3, we show how we can avoid the partition-based trapdoor on a translation based cipher.
Finally, in Section 4, we discuss the fact that studying the group generated by the round functions of a block cipher may not be sufficient to guarantee security against these trapdoors for the cipher, {and we show that avoiding the partition-based trapdoor we can give some properties of the group generated by an $\ell$-round cipher with independent round-keys}. We report our conclusions and some final remarks in Section 5.
\section{Preliminaries and notation}

Let $\cC$ be a block cipher acting on a message space $V = (\FF_2)^d$, for some $d\ge 1$ (we suppose that $V$ coincides with the ciphertext space). Let $\cK$ be its key space. Then any key $k \in\cK$ individuates a permutation $\gt_k$ on the space $V$ and our cipher is given by the set
$$\cC=\{\gt_k\mid k \in\cK\}.$$

We are interested in determining the properties of the group
$\Gamma(\cC)=\langle \gt_k\mid k \in\cK\rangle$. Unfortunately, the study of $\Gamma(\cC)$ is a difficult task, in general. Most modern block ciphers are iterated ciphers, i.e., obtained by a composition of several key-dependent permutations, called rounds. This allows to investigate an other permutation group related to $\cC$. For an iterated block cipher $\cC$ each $\gt_k$ is a composition
of some permutations of $V$, say $\gt_{k,1}, ... , \gt_{k,\ell}$. For any round $h$, let 
$$\Gamma_h(\cC)=\langle \gt_{k,h}\mid k \in\cK \rangle,$$
therefore, we can define the group containing $\Gamma(\cC)$ generated by the round functions
$$\Gamma_\infty(\cC)=\langle \Gamma_h(\cC)\mid h=1,...,\ell \rangle.$$

\subsection{Translation based ciphers}
Here we consider translation based ciphers, introduced in \cite{CGC-cry-art-carantisalaImp}. This class of iterated block ciphers includes some well-known ciphers,
as for instance AES \cite{daemen2002design} and SERPENT  \cite{CGC-cry-art-serpent}.

We first fix the notation, in order to recall the definition of a translation based cipher $\cC$. Let $m, b > 1$ and
$$
V=V_1\oplus\dots\oplus V_b,
$$
where each $V_i$ is isomorphic to $(\FF_2)^m$. We will denote by $\Sym(V)$ the symmetric group on $V$. Given $v \in V$, we write $\gs_v\in \Sym(V )$ for the translation of $V$ mapping $x$ to $x + v$. We denote by $T(V)$ the group of all translations of $V$. We will write the action of $g \in \Sym(V)$ on an element $v\in V$ as $vg$.

For any $v\in V$, we will write $v=v_1\oplus\dots \oplus v_b$, where $v_i\in V_i$. Also, we consider the projections $\pi_i:V\to V_i$ mapping $v\mapsto v_i$.

Any $\gamma\in \Sym(V)$ that acts as $v \gamma=v_1\gamma_1\oplus\dots\oplus v_b\gamma_b$, for some $\gamma_i\in \Sym(V_i)$, is called {\em bricklayer transformation} (a ``parallel map'') and any $\gamma_i$'s is a {\em brick}. Traditionally, the maps $\gamma_i$'s are called S-boxes and $\gamma$ a ``parallel S-box''. A linear map $\gl:V\to V$ is traditionally said a ``Mixing Layer'' when used in composition with parallel maps. 
For any $I\subset \{1,...,b\}$, with $I\ne \emptyset$ and $I\ne \{1,...,b\}$, we define $\bigoplus_{i\in I} V_i$ a {\em wall}. 
\begin{definition}
A linear map $\gl\in \GL(V)$ is called a {\em proper mixing layer} if no wall is invariant under $\gl$.
\end{definition}

We can characterize translation-based block ciphers by the following:
\begin{definition}[\cite{CGC-cry-art-carantisalaImp}]\label{def:tb}
 A block cipher  $\mathcal{C} = \{ \gt_k \mid k  \in \mathcal{K} \}$ over
  ${\FF_2}$ is called {\em translation based (tb)} if:
       \begin{itemize}
    \item it is the composition of a finite number of rounds, such that any round $\gt_{k,h}$ can be written as $\gamma_h\gl_h\gs_{ k_h}$, where
    \begin{itemize}
    \item[-] $\gamma_h$ is a round-dependent bricklayer transformation (but it does not depend on $k$) and $0\gamma_h=0$,
     \item[-] $\lambda_h$ is a round-dependent linear map (but it does not depend on $k$),
    \item[-] ${k_h}$ is in $V$ and depends on both $k$ and the round (${k_h}$ is called a ``round key''),
     \end{itemize}
     
      \item for at least one round, called a ``proper'' round, we have (at the same time) that $\gl_h$ is proper and that the map $\Phi_h:\mathcal{K} \to V$ given by $k \mapsto k_h$ is
      surjective.
    \end{itemize}

\end{definition}

  The assumption $0\gamma_h = 0$ is not restrictive. Indeed, we can always include $0\gamma_h$ in the round key addition of the previous round (see \cite[Remark 3.3]{CGC-cry-art-carantisalaImp}).

Let $m\ge 1$, and let $f:(\FF_2)^m\rightarrow(\FF_2)^m$ be a vectorial Boolean function. We denote by $\hat{f}_u(x)=f(x+u)+f(x)$ the derivative of $f$ in the direction of $u\in(\FF_2)^m$. 
\begin{definition}
Let $f$ be a {vectorial Boolean function}, for any $a,b\in{(\FF_2)^m}$  we define
$$
\gd_f(a,b)=|\{x\in{(\FF_2)^m} \mid\, \hat{f}_a(x)=b\}|.
$$
Then, $f$ is said differentially $\gd$-uniform if 
$$
{\gd}=\max_{a,b \in{(\FF_2)^m},\atop
a\neq 0}\gd_f(a,b)\,.
$$
\end{definition}

Vectorial Boolean functions used as S-boxes in block ciphers must have low uniformity to prevent differential cryptanalysis (see \cite{CGC2-cry-art-biham1991differential}).
 By \cite[Fact 3]{CGC-cry-art-carantisalaImp}, a vectorial Boolean function differentially $\gd$-uniform satisfies
$$|{\rm Im}(\hat{f}_u)|\geq\frac{2^{m}}{\delta}.$$

\begin{definition}
Let $1\leq r<m$ and $f(0)=0$, we say that $f$ is {\it strongly $r$-anti-invariant} if, for any two subspaces $U$ and $W$ of $(\mathbb F_2)^m$ such that $f(U)=W$, then either $\dim(U)=\dim(W)<m-r$ or $U =W=(\mathbb F_2)^m$.
\end{definition}

\subsection{Partition-based trapdoors}
We recall that a permutation group $G$ acting on $V$ is called primitive if it has no nontrivial $G$-invariant partition of $V$.
That is, there no exists a partition $\cA$ of $V$ different from the trivial partitions $\{\{v\}\mid v \in V\}$, $\{V\}$, such that  $Ag\in \cA$ for all $A\in\cA$ and $g\in G$.
 On the other hand, if a nontrivial $G$-invariant partition exists, the group is called imprimitive.

As said before a property of $\Gamma_{\infty}$ considered undesirable is the imprimitivity. Paterson \cite{CGC-cry-art-paterson1} showed that if this group is imprimitive, then it is possible to embed a trapdoor in the cipher. 

Another trapdoor, based on the idea of the imprimitive action, is the Partition-based trapdoor, introduced in a recent work \cite{bannier2016partition}. In this work the authors give some conditions to construct a translation-based cipher which associates a partition of the plaintext space to another partition of the ciphertext  space.

We report some of the definitions and results presented in \cite{bannier2016partition}.

\begin{definition}
Let $\gr$ be a permutation of $V$ and $\cA, \cB$ be two partitions of $V$. Let $\cA \gr$ denote the set $\{A\gr \mid A \in \cA\}$. We say that $\gr$ maps $\cA$ to $\cB$ if $\cA\gr = \cB$. Moreover, let $G$ be a permutation group we say that $G$ maps $\cA$ to $\cB$ if for all $\gr\in G$, $\gr$ maps $\cA$ to $\cB$.
\end{definition}

\begin{remark}\label{rm:prim}
Note that a permutation group $G$ is imprimitive if there exists a non-trivial partition $\cA$ such that for all $\gr \in G$ $\cA\gr = \cA$.
\end{remark}

\begin{definition}
 A partition $\cA$ of $V$ is said linear if there exists $U$ a subspace of $V$ such that $$\cA=\{ U+v\mid v\in V\}.$$ We denote with $\cL(U)$ such a partition.
\end{definition}

The following result, introduced by Harpes in \cite{harpes1997partitioning}, characterizes the possible partitions $\cA$ and $\cB$ such that $T(V)$ maps $\cA$ to $\cB$.

\begin{proposition}
Let $\cA$ and $\cB$ be two partitions of $V=(\FF_2)^d$. Then the permutation group $T(V)$ maps $\cA$ to $\cB$ if and only if $\cA=\cB$ and $\cA$ is a linear partition.
\end{proposition}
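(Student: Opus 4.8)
The plan is to prove both implications directly, relying on the fact that $T(V)$ acts regularly (sharply transitively) on $V$ by translations. For the nontrivial direction, assume $T(V)$ maps $\cA$ to $\cB$. First I would fix an arbitrary block $A\in\cA$ and let $U = A + a$ where $a$ is any chosen element of $A$ (so that $0\in U$; note $U$ need not a priori be a subspace — that is what we must prove). The key observation is that for every $v\in V$ the translation $\gs_v$ sends $A$ to some block $A\gs_v = A+v \in \cB$, and since the blocks of $\cB$ partition $V$, the sets $\{A+v : v\in V\}$ must be pairwise equal or disjoint. Taking $v$ ranging over $U = A+a$ itself, one gets $A + (A+a) \supseteq A+a = A$ (pick the term with the shift equal to $a$ — wait, more carefully: $0\in U=A+a$ so $A + 0$-shift... ) — the cleanest route is: translating $A$ by an element $u\in A$ yields a block of $\cB$ containing $u+a'$ for each $a'\in A$; in particular choosing $a'=a$ shows $A+u$ (for $u\in A$) contains $u+a$, and choosing $u = a$ shows $A+a = U$ is a block of $\cB$ containing $2a = 0$. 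Hence $U\in\cB$ and $0\in U$.

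Next I would show $U$ is closed under addition. Since $U\in\cB$ and $0\in U$, for any $u\in U$ the translate $U\gs_u = U+u$ is again a block of $\cB$, and it contains $0+u = u \in U$; two blocks of the partition $\cB$ that share the point $u$ must coincide, so $U+u = U$. As $u\in U$ was arbitrary, $U = U+u$ for all $u\in U$, which (together with $0\in U$) gives $U+U = U$, i.e. $U$ is closed under addition; over $\FF_2$ this already makes $U$ a subspace (closure under the only nontrivial scalar $1$ is automatic). Therefore $\cB \supseteq \{U+v : v\in V\} = \cL(U)$, and since both are partitions of $V$ we get $\cB = \cL(U)$. Running the same argument with the roles of $\cA$ and $\cB$ exchanged — using that $T(V) = T(V)^{-1}$ maps $\cB$ to $\cA$ — shows $\cA$ is also a linear partition $\cL(U')$; but then $\cA = \cL(U')\gs_v$-type reasoning, or simply observing $A\in\cA$ was shown above to equal $U$ (up to translation) with $U$ a coset appearing in $\cB$, forces $\cA = \cB$. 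Concretely: the block of $\cA$ through $0$, call it $A_0$, satisfies $A_0 = A_0 + a$ translated appropriately... the tidy statement is that $A_0\gs_0 = A_0 \in\cB$ so $A_0$ equals the block $U$ of $\cB$ through $0$, and since linear partitions are determined by that one block, $\cA = \cL(A_0) = \cL(U) = \cB$.

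For the converse, if $\cA = \cB = \cL(U)$ for a subspace $U$, then for any $v\in V$ and any block $U+w$ we have $(U+w)\gs_v = U + (w+v) \in \cL(U)$, so $\cL(U)\gs_v = \cL(U)$; thus every translation maps $\cA$ to $\cB$, i.e. $T(V)$ maps $\cA$ to $\cB$. The main obstacle, and the only place requiring care, is the first part: extracting from the mere hypothesis ``$A+v$ is always a block of $\cB$'' the two facts that some translate of $A$ is a subgroup and that $\cA$ and $\cB$ then necessarily coincide; the clean way to organize it is to always exploit ``two blocks of a partition sharing a point are equal'' after first arranging that $0$ lies in the block under consideration.
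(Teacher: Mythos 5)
Your proof is correct. Note that the paper itself does not prove this proposition: it is quoted from Harpes--Massey, so there is no in-paper argument to compare against; your argument is the standard one and it works. Two small remarks. First, since $T(V)$ is a group it contains the identity $\gs_0$, and the definition of ``$T(V)$ maps $\cA$ to $\cB$'' applied to $\gs_0$ gives $\cA=\cB$ in one line; this would let you drop the somewhat meandering final paragraph of the forward direction. Second, the one step whose justification as written is slightly off is ``since $U\in\cB$ and $0\in U$, for any $u\in U$ the translate $U+u$ is again a block of $\cB$'': the hypothesis speaks about translates of blocks of $\cA$, not of $\cB$, so membership of $U$ in $\cB$ does not by itself yield this. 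The fix is immediate -- either invoke $\cA=\cB$ first, or simply write $U+u=A+(a+u)\in\cB$, which is the hypothesis applied to the block $A\in\cA$ and the translation $\gs_{a+u}$. With that rephrasing the chain (two blocks of a partition meeting in a point coincide, hence $U+u=U$ for all $u\in U$, hence $U$ is a subspace over $\FF_2$, hence $\cB=\cL(U)=\cA$) is airtight, and the converse direction is trivially correct as you state it.
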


Focusing  on the mixing-layer we have.

\begin{proposition}[\cite{bannier2016partition}]
Let $\gl$ be a linear permutation of $V$, i.e. $\gl\in \GL(V)$, and let $U$ be a subspace of $V$. Then $\cL(U)\gl=\cL(U\gl)$.
\end{proposition}

We report now the main theorem of \cite{bannier2016partition}. In \cite{bannier2016partition} the following result is reported for a SPN cipher with the same S-box and mixing-layer for each round, but can be extended to any translation based cipher with independent round keys.

\begin{theorem}\label{th:francesi}
Let $\cC$ be a translation based cipher on $V$. Suppose that there exist $\cA$ and $\cB$ non-trivial partitions such that for all $\ell$-tuples of round-keys $k=(k_1,...,k_\ell)$ the encryption function $\gt_k$ maps $\cA$ to $\cB$. Define $\cA_1=\cA$ and for $1\le i\le \ell$ $\cA_{i+1}=\cA_{i}\gt_{i}$, where $\gt_i=\gamma_i\gl_i$ is the $i$-th round function without the round key translation. Then
\begin{itemize}
\item $\cA_{\ell+1}=\cB$
\item for any $1\le i\le \ell+1$ $\cA_{i}$ is a linear partition.
\end{itemize}
\end{theorem}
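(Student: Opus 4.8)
The plan is to argue by induction on $i$, showing simultaneously that each $\cA_i$ is a linear partition and that the translation group $T(V)$ maps $\cA_i$ to itself, and then to identify $\cA_{\ell+1}$ with $\cB$ at the end. The key observation to set up the induction is that the hypothesis — $\gt_k$ maps $\cA$ to $\cB$ for \emph{every} $\ell$-tuple of round keys — lets us vary the round keys one at a time. Fix a proper round, say round $j$, so that $\Phi_j$ is surjective; comparing the encryption function for a key tuple $k$ with the tuple obtained by replacing $k_j$ with $k_j + v$ (for arbitrary $v \in V$, by surjectivity of $\Phi_j$), the two encryption functions differ by conjugating the translation $\gs_v$ by the ``tail'' of the cipher after round $j$. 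Peeling off rounds in this way, one extracts that a suitable translation subgroup must map one of the intermediate partitions $\cA_i$ to another, and by the Harpes proposition (Proposition stated above: $T(V)$ maps $\cA$ to $\cB$ iff $\cA = \cB$ and $\cA$ is linear) this forces $\cA_i$ to be linear and $T(V)$-invariant.

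Concretely, the base case $\cA_1 = \cA$ needs $\cA$ to be shown linear; this is exactly where the proper-round hypothesis does its work, so I would treat the proper round first. Write $\gt_k = \alpha \cdot \gt_{k,j} \cdot \beta$ where $\alpha$ is the composition of rounds $1,\dots,j-1$ (with their keys) and $\beta$ is the composition of rounds $j+1,\dots,\ell$ (with their keys), and $\gt_{k,j} = \gamma_j \gl_j \gs_{k_j}$. Applying $\gt_k$ and the modified $\gt_{k'}$ (with $k_j$ replaced by $k_j+v$) to $\cA$ and using $\cA\gt_k = \cB = \cA\gt_{k'}$, one gets $\cA \alpha \gamma_j \gl_j \gs_{k_j} \beta = \cA \alpha \gamma_j \gl_j \gs_{k_j + v} \beta$, hence $\cA\alpha\gamma_j\gl_j\gs_{k_j}$ is mapped to itself by $\gs_v^{\,} $ conjugated appropriately — more precisely $\mathcal{D} := \cA\alpha\gamma_j\gl_j\gs_{k_j}$ satisfies $\mathcal{D}\gs_v = \mathcal{D}$ for all $v$, i.e. $T(V)$ fixes $\mathcal{D}$, so $\mathcal{D}$ is linear by Harpes. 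Then linearity propagates backward through the linear maps $\gl_i$ (which send linear partitions to linear partitions, by the Proposition on $\cL(U)\gl = \cL(U\gl)$) and through the bricklayer maps $\gamma_i$; the point is that a linear partition pushed through \emph{any} bijection and then required to be $T(V)$-invariant again must be linear, and one runs this argument at each round using the telescoping relation $\cA_{i+1} = \cA_i \gt_i$ together with the same key-variation trick at the (or a) proper round to re-derive $T(V)$-invariance at each stage. Finally $\cA_{\ell+1} = \cA\gt_1\cdots\gt_\ell = \cA\gt_k\gs_{k_\ell}^{-1}\cdots$, and unwinding the round-key translations (each of which, being a translation, maps a linear partition to the same linear partition) gives $\cA_{\ell+1} = \cA\gt_k = \cB$ for the trivial key tuple, completing the proof.

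The main obstacle I anticipate is bookkeeping the round keys correctly: the statement is about $\gt_i = \gamma_i\gl_i$ \emph{without} the round-key translation, whereas the hypothesis is about $\gt_k$ \emph{with} all the translations. Reconciling these requires repeatedly using that translations preserve linear partitions (so inserting or deleting a $\gs_{k_i}$ does not change whether an intermediate partition is linear, nor which linear partition it is, once we know the relevant subspace is $T(V)$-invariant), and carefully tracking that the invariance derived from varying one proper round key transfers to \emph{all} the $\cA_i$ rather than just one of them. A secondary subtlety is that there might be only one proper round, so the backward propagation of linearity through non-proper rounds must rely purely on the structural propositions about $\gl$ and on $\cA_{i+1} = \cA_i\gt_i$, not on any key surjectivity at those rounds; I expect this to go through because once a single $\cA_{i_0}$ is known to be linear and $T(V)$-invariant, the relation $\cA_{i_0+1} = \cA_{i_0}\gamma_{i_0}\gl_{i_0}$ combined with $T(V)$-invariance of $\cA_{i_0+1}$ (itself obtained from the global hypothesis applied to a shifted window of rounds) forces $\cA_{i_0+1}$ linear, and similarly going backward.
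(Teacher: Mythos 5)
The paper itself states this theorem without proof (it is imported from Bannier et al.), so I can only judge your argument on its own merits. Your central computation is correct and is the right tool: fixing all round keys but one and letting the remaining one range over $V$ shows that the partition sitting immediately after that round is fixed by every translation $\gs_v$, hence is linear by Harpes' proposition; and the all-zero key tuple gives $\cA_{\ell+1}=\cA\gt_1\cdots\gt_\ell=\cB$ for free.

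The gap is in how you pass from ``one intermediate partition is linear'' to ``all of them are''. You tie the key-variation trick to a \emph{proper} round, invoking surjectivity of $\Phi_j$; but $\Phi_j$ concerns which round keys are reachable through the key schedule, whereas the hypothesis here already quantifies over \emph{all} tuples $(k_1,\dots,k_\ell)\in V^\ell$. So you may --- and must --- run the variation at every round: taking all keys zero except $k_i=v$ gives $\cA_{i+1}\gs_v\,(\gamma_{i+1}\gl_{i+1}\cdots\gamma_\ell\gl_\ell)=\cB=\cA_{i+1}(\gamma_{i+1}\gl_{i+1}\cdots\gamma_\ell\gl_\ell)$, hence $\cA_{i+1}\gs_v=\cA_{i+1}$ for all $v$, and each $\cA_{i+1}$, $1\le i\le\ell$, is linear directly, with no propagation needed. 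Your substitute for this --- pushing linearity ``backward through the bricklayer maps'' and re-deriving $T(V)$-invariance from ``a shifted window of rounds'' --- does not work as written: the image of a linear partition under a nonlinear bijection $\gamma_i$ need not be linear, and the $T(V)$-invariance you appeal to at the next stage is exactly what needs proving; if one were really only allowed to vary the proper round's key, nothing would force it. Finally, note that no version of this argument reaches $\cA_1=\cA$ itself: in the tb format there is no key addition before $\gamma_1$, so one only obtains that $\cA\gamma_1\gl_1$ is linear; the linearity of $\cA$ asserted in the statement is inherited from Bannier et al.'s SPN model, which has an initial key whitening. That is a defect of the transplanted statement rather than of your proof, but you should flag it rather than claim to have established it.
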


\section{Avoiding the partition-based trapdoor}

In this section we will give sufficient conditions on the components of a tb cipher to guarantee that such a trapdoor cannot be implemented.

\begin{lemma}\label{lm:lm1}
Let $\gamma$ be a permutation on $V$ such that $0\gamma=0$ and suppose that $\gamma$ maps $\cL(U)$ to $\cL(W)$ then for all $u\in U$ $$\Im(\hat{\gamma}_u)\subseteq W.$$
\end{lemma}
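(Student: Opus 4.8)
The plan is to unwind the definition of ``$\gamma$ maps $\cL(U)$ to $\cL(W)$'' and then use the elementary fact that two points lying in a common coset of $U$ are sent by $\gamma$ into a common coset of $W$.

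First I would record what the hypothesis says concretely: $\cL(U)\gamma = \cL(W)$ means that for every $v\in V$ the set $(U+v)\gamma$ is again a coset of $W$ in $V$; equivalently, $\gamma$ carries the partition of $V$ into $U$-cosets onto the partition into $W$-cosets. (Using in addition $0\gamma=0$ one even gets $U\gamma=W$, since $0\in U\gamma$ forces the $W$-coset $U\gamma$ to be $W$ itself; this sharpening is not needed for the present statement but is worth isolating for later use.)

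Next, fix $u\in U$ and an arbitrary $x\in V$. Since $u\in U$, the elements $x$ and $x+u$ lie in the same coset $U+x$, so $x\gamma$ and $(x+u)\gamma$ lie in the single $W$-coset $(U+x)\gamma = W+w$ for some $w\in V$. Hence $x\gamma+w\in W$ and $(x+u)\gamma+w\in W$; adding these and using that we work in characteristic $2$ gives $(x+u)\gamma+x\gamma\in W$, i.e. $\hat{\gamma}_u(x)\in W$. Letting $x$ range over $V$ yields $\Im(\hat{\gamma}_u)\subseteq W$, as claimed.

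There is no serious obstacle here; the only points requiring care are the direction of the ``maps to'' relation (it is the \emph{image} of each $U$-coset that is a $W$-coset) and the bookkeeping of the right action of $\gamma$ on $V$. The hypothesis $0\gamma=0$ plays only an auxiliary role — it is the standing normalization of the S-boxes and is what pins down $U\gamma=W$ — while the inclusion $\Im(\hat{\gamma}_u)\subseteq W$ itself follows from the coset-to-coset property alone.
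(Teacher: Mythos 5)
Your argument is correct and is essentially the paper's own proof: both identify $(U+v)\gamma$ as a single coset of $W$ and conclude that the sum $(u+v)\gamma+v\gamma$ of two elements of that coset lies in $W$. Your side remark that $0\gamma=0$ is only needed to pin down $U\gamma=W$ (and not for the inclusion itself) is also accurate.
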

\begin{proof}
The fact that $\gamma$ maps $\cL(U)$ to $\cL(W)$ and $0\gamma=0$ imply that $U\gamma=W$. Moreover for all $v\in V$ we have that $v\gamma \in (U+v)\gamma$. Then $(U+v)\gamma=W+v\gamma$. This implies that $$(u+v)\gamma+v\gamma\in W$$ for all $v\in V$ and $u\in U$.
\end{proof}

From Lemma \ref{lm:lm1} we have the following.

\begin{proposition}\label{prop:blocchi}
Let $\gamma$ be a brick-layer transformation, i.e. $\gamma=(\gamma_1,...,\gamma_b)$ with $\gamma_i\in\Sym(V_i)$ for all $i$. Suppose that for all $i$ $\gamma_i$ is
\begin{enumerate}
\item  differentially $2^r$-uniform, with $r<m$, 
\item strongly $(r-1)$-anti-invariant 
\end{enumerate}
Let $\cL(U)$ and $\cL(W)$ be non-trivial linear partitions of $V$, then $\gamma$ maps $\cL(U)$ to $\cL(W)$ if and only if $U$ and $W$ are walls, in particular $U=W$.
\end{proposition}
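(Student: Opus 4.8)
The plan is to prove both implications, treating the non-trivial direction as the main work. Suppose first that $U$ and $W$ are walls; then by definition each of $U$ and $W$ is a direct sum $\bigoplus_{i\in I}V_i$, respectively $\bigoplus_{i\in J}V_j$, for some proper non-empty index sets $I,J\subseteq\{1,\dots,b\}$. Since $\gamma$ acts coordinatewise and $0\gamma_i=0$, I would compute $U\gamma=\bigoplus_{i\in I}V_i$ directly (each $\gamma_i$ is a permutation of $V_i$ fixing $0$, hence $V_i\gamma_i=V_i$), and then note that for any $v\in V$, $(U+v)\gamma=U\gamma+(\text{something})$ reduces to the translate $U+v\gamma$ precisely because on the coordinates in $I$ the map is a bijection of $V_i$ onto itself and on the complement it is a well-defined function of $v$ alone. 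This forces $I=J$, i.e. $U=W$, and shows $\gamma$ maps $\cL(U)$ to $\cL(W)$. (This direction is essentially bookkeeping.)

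For the converse — the substantive direction — assume $\gamma$ maps $\cL(U)$ to $\cL(W)$ with both partitions non-trivial. By Lemma \ref{lm:lm1}, $\Im(\hat\gamma_u)\subseteq W$ for every $u\in U$, and by the proof of that lemma $U\gamma=W$. The first step is to translate this into a per-brick statement: writing $u=u_1\oplus\cdots\oplus u_b$, the derivative $\hat\gamma_u$ decomposes as $\hat\gamma_u(x)=\hat{(\gamma_1)}_{u_1}(x_1)\oplus\cdots\oplus\hat{(\gamma_b)}_{u_b}(x_b)$, so $\Im(\hat\gamma_u)=\prod_i\Im(\hat{(\gamma_i)}_{u_i})$. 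Thus for each $i$ with $\pi_i(U)\ne\{0\}$, picking $u\in U$ with $u_i\ne 0$ gives $\Im(\hat{(\gamma_i)}_{u_i})\subseteq \pi_i(W)$; since $\gamma_i$ is differentially $2^r$-uniform, $|\Im(\hat{(\gamma_i)}_{u_i})|\ge 2^{m-r}$ by the cited Fact~3 consequence, forcing $\dim\pi_i(W)\ge m-r$.

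The second step is the rigidity argument using strong $(r-1)$-anti-invariance to pin down $U$ and $W$ exactly. From $U\gamma_i$ (more precisely $\pi_i(U)\gamma_i=\pi_i(W)$, using that $\gamma$ is a bricklayer and $U\gamma=W$) I get for each $i$ a pair of subspaces of $(\FF_2)^m$ with $\gamma_i(\pi_i(U))=\pi_i(W)$; strong $(r-1)$-anti-invariance then says either $\dim\pi_i(U)=\dim\pi_i(W)<m-(r-1)=m-r+1$, i.e. $\le m-r$, or $\pi_i(U)=\pi_i(W)=(\FF_2)^m$. Combining with the lower bound $\dim\pi_i(W)\ge m-r$ from the first step whenever $\pi_i(U)\ne\{0\}$: the ``small'' alternative gives $\dim\pi_i(W)=m-r$ exactly, but then I would derive a contradiction with the strong anti-invariance applied more carefully (the point is that $\Im(\hat{(\gamma_i)}_{u_i})$ being contained in a subspace of dimension exactly $m-r$ while also being the image of $\pi_i(U)$ of dimension $\le m-r$ is too constrained — here one must check that a nonzero derivative image cannot sit inside such a tight subspace without violating the anti-invariance hypothesis, perhaps by considering the subspace generated by the derivative image together with $\pi_i(W)$). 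So for each $i$ we must be in the second alternative: $\pi_i(U)=(\FF_2)^m$ whenever $\pi_i(U)\ne\{0\}$. Since $U$ is a subspace with each nonzero projection equal to the full $V_i$, and $U$ is proper and non-zero, I conclude $U=\bigoplus_{i\in I}V_i$ for a proper non-empty $I$ — a wall — and symmetrically $W$ is a wall; the first direction's computation then yields $U=W$.

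The main obstacle is the delicate dimension count in the second step: reconciling $\dim\pi_i(W)\ge m-r$ with the anti-invariance dichotomy to exclude the borderline case $\dim\pi_i(W)=m-r$. I expect this to require arguing that if $\pi_i(U)$ has dimension $\le m-r$ and maps onto $\pi_i(W)$ of dimension $\ge m-r$, then in fact both have dimension exactly $m-r$ and $\gamma_i$ restricts to a bijection $\pi_i(U)\to\pi_i(W)$ — at which point the containment $\Im(\hat{(\gamma_i)}_{u_i})\subseteq\pi_i(W)=\pi_i(U)\gamma_i$ must be played against the exact form of strong $(r-1)$-anti-invariance, possibly needing $f=\gamma_i$ to be chosen so that the relevant subspace equality $f(U')=W'$ with $\dim=m-r\not< m-(r-1)$ is forbidden unless trivial. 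The rest is routine once this is settled; I would also double-check the boundary hypotheses ($0\gamma_i=0$ is needed for Lemma~\ref{lm:lm1}, and $r<m$ ensures the partitions in play are genuinely non-trivial).
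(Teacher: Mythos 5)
Your converse direction has two genuine gaps, and the second one is fatal to the projection-based strategy as written. First, the step you yourself flag as the ``main obstacle'' --- excluding the borderline case $\dim\pi_i(W)=m-r$ --- has a one-line resolution that you did not supply: since $\gamma_i$ is a \emph{permutation}, $\hat{(\gamma_i)}_{u}(x)=0$ would force $\gamma_i(x+u)=\gamma_i(x)$ with $u\neq 0$, so $0\notin\Im(\hat{(\gamma_i)}_{u})$. A subspace containing at least $2^{m-r}$ \emph{nonzero} vectors therefore has dimension at least $m-r+1$, which contradicts the small alternative of strong $(r-1)$-anti-invariance outright; no delicate borderline analysis is needed. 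This is exactly the observation the paper uses.

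Second, and more seriously, your concluding inference ``each nonzero projection $\pi_i(U)$ equals $V_i$, hence $U$ is a wall'' is false. The diagonal $U=\{x\oplus x\mid x\in V_1\}$ inside $V_1\oplus V_2$ satisfies $\pi_1(U)=V_1$ and $\pi_2(U)=V_2$ but is not a wall. Worse, every per-brick condition you derive ($\Im(\hat{(\gamma_i)}_{u_i})\subseteq\pi_i(W)$ and $\pi_i(U)\gamma_i=\pi_i(W)$) is trivially satisfied by such a diagonal, because factoring $\Im(\hat{\gamma}_u)$ as a product and then projecting coordinatewise discards precisely the information that rules it out: $\Im(\hat{f}_u)\times\Im(\hat{f}_u)$ is not contained in the diagonal unless the derivative image is a singleton. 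The paper's proof avoids this by working with the intersections $U\cap V_i$ rather than the projections. It first shows that $\pi_i(U)\neq\{0\}$ forces $U\cap V_i\neq\{0\}$: for $u\in U$ with $u_i\neq 0$ and $v_i\in V_i$, the vector $u\gamma+(u+v_i)\gamma+v_i\gamma\in W$ is supported only on brick $i$ and sweeps out $|\Im(\hat{(\gamma_i)}_{u_i})|\geq 2^{m-r}\geq 2$ values of $W\cap V_i$, so $W\cap V_i\neq\{0\}$ and hence $U\cap V_i\neq\{0\}$ (since $(U\cap V_i)\gamma=W\cap V_i$ for a bricklayer fixing $0$). Only then does it run the dimension/anti-invariance argument on the pair of subspaces $(U\cap V_i)\gamma_i=W\cap V_i$ of $V_i$, using a derivative in a direction $u\in U\cap V_i$ so that its image genuinely lands in $W\cap V_i$. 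To repair your proof you would need to replace projections by intersections throughout and add this nonvanishing step.
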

\begin{proof}
Suppose that $\gamma$ maps $\cL(U)$ to $\cL(W)$ and that $U$ is not a wall. Then let $I=\{i\mid \pi_i(U)\ne 0\}$. There exists $i\in I$ such that $U\cap V_i\ne V_i$ because $U$ is not a wall. Moreover $U\cap V_i\ne \{0\}$. Indeed, let $u\in U$, with $u_i\ne0$. For all $v\in V$ such that $v_j=0$ for all $j\ne i$ we have that $$(u+v_i)\gamma+v_i\gamma\in W$$
from Lemma \ref{lm:lm1}. Moreover $u\gamma \in W$. It follows that $u\gamma+(u+v_i)\gamma+v_i\gamma\in W$. 
The vector $u\gamma+(u+v_i)\gamma+v_i\gamma$ has all nonzero components but for the one in $V_i$, which is $u_i\gamma_i+(u_i+v_i)\gamma_i+v_i\gamma_i \in W \cap V_i$. As $\gamma$ is a brick layer transformation, if $U\cap V_i= \{0\}$, then $W\cap V_i= \{0\}$. So, if the vector $u_i\gamma_i+(u_i+v_i)\gamma_i+v_i\gamma_i$  is zero for all $v_i \in V_i$, then $\Im(\hat{\gamma}_i)=\{ u_i\gamma_i \}$, of size 1. This contradicts the first condition on the $\gamma_i$'s.

Thus  $U\cap V_i$ is a proper subspace of $V_i$ and $(U\cap V_i)\gamma=W\cap V_i$ is also a proper subspace of $V_i$. Since $\Im(\hat{\gamma}_u)\subseteq W$ for all $u\in U$, we have $\Im(\hat{\gamma_i}_u)\subseteq W\cap V_i$ for all $u\in U\cap V_i$. Thus, $|\Im(\hat{\gamma_i}_u)|$ for any non-zero $u$ is greater or equal to $2^{m-r}$ and, being $\gamma_i$ a permutation, the set $\Im(\hat{\gamma_i}_u)$ does not contain the zero vector, which implies $\dim(W\cap V_i)\ge m-r+1$. This contradicts the second condition on the $\gamma_i$'s. Then, $U$ is a wall and, since $\gamma$ is a parallel map, we have $U=W$.

Vice versa, consider any wall $U$, as $\gamma$ is a parallel map, it is easy to check that $\gamma$ maps $\{U+v\mid v\in V\}$ in itself.
\end{proof}

The following definition was introduced in \cite{calderinisalerno}. 

\begin{definition}
Let $\gl\in \GL(V)$. $\gl$ is said to be a strongly proper mixing-layer if it does not map a proper wall in an other wall.
\end{definition}

We define a strongly-proper round of a tb cipher as a proper round, where the mixing-layer is also strongly proper.

\begin{theorem}\label{th:main}
Let a round $h<\ell$ be a strongly proper round and suppose that the brick-layer transformations of round $h$ and round $h+1$, $\gamma_h$ and $\gamma_{h+1}$, satisfy Condition 1) and 2) of Proposition \ref{prop:blocchi}. Then, there do not exist $\cA$ and $\cB$ non-trivial partitions such that for all $\ell$-tuples of round keys the encryption functions map $\cA$ to $\cB$.
\end{theorem}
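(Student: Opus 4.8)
The plan is to argue by contradiction and feed the assumed trapdoor into Theorem~\ref{th:francesi}. Suppose there exist non-trivial partitions $\cA$ and $\cB$ such that $\gt_k$ maps $\cA$ to $\cB$ for every $\ell$-tuple of round keys. Then Theorem~\ref{th:francesi} applies: with $\cA_1=\cA$ and $\cA_{i+1}=\cA_i\gt_i$, where $\gt_i=\gamma_i\gl_i$ is the $i$-th round function without the key translation, each $\cA_i$ is a linear partition, say $\cA_i=\cL(U_i)$. First I would record that each $\cA_i$ is moreover non-trivial: $\cA_1=\cA$ is non-trivial and every $\gt_i$ is a bijection of $V$, so it carries a non-trivial partition to a non-trivial partition; hence $U_i\notin\{\{0\},V\}$ for all $i$.

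Next I would isolate rounds $h$ and $h+1$ and strip off the mixing layers using the Proposition that $\cL(U)\gl=\cL(U\gl)$ for $\gl\in\GL(V)$. From $\cL(U_{h+1})=\cA_{h+1}=\cA_h\gamma_h\gl_h$ one gets $\cA_h\gamma_h=\cL(U_{h+1})\gl_h^{-1}=\cL(U_{h+1}\gl_h^{-1})$, i.e. $\gamma_h$ maps $\cL(U_h)$ to $\cL(W_h)$ with $W_h:=U_{h+1}\gl_h^{-1}$; both partitions are non-trivial ($\cL(U_h)$ by the previous step, $\cL(W_h)$ because $\gl_h^{-1}$ is a linear bijection). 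Since $\gamma_h$ satisfies Conditions 1) and 2) and $0\gamma_h=0$, Proposition~\ref{prop:blocchi} forces $U_h=W_h$ to be a wall; in particular $U_{h+1}=U_h\gl_h$. The same argument applied to round $h+1$ (using that $\gamma_{h+1}$ also satisfies 1) and 2), and that $\cA_{h+2}=\cL(U_{h+2})$ is non-trivial) shows that $U_{h+1}$ is a wall as well.

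Finally I would combine the two facts: $U_h$ and $U_{h+1}=U_h\gl_h$ are both walls, so $\gl_h$ maps the wall $U_h$ onto the wall $U_{h+1}$. But round $h$ is strongly proper, so $\gl_h$ is a strongly proper mixing layer and by definition cannot map a wall onto another wall — a contradiction. Hence no such $\cA$ and $\cB$ exist, which is the claim.

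I expect the only delicate points to be bookkeeping rather than conceptual: (i) maintaining the non-triviality of the partitions $\cA_i$ all the way along, since Proposition~\ref{prop:blocchi} is stated only for non-trivial linear partitions (a partition $\cL(U)$ is trivial exactly when $U\in\{\{0\},V\}$); and (ii) the conjugation by $\gl_h^{-1}$ that converts the statement ``$\gamma_h\gl_h$ turns $\cL(U_h)$ into $\cL(U_{h+1})$'' into the form ``$\gamma_h$ maps $\cL(U_h)$ to $\cL(W_h)$'' to which Proposition~\ref{prop:blocchi} directly applies. Everything else is a straight chaining of the quoted results; and the reason the hypothesis must constrain two consecutive bricklayers $\gamma_h,\gamma_{h+1}$ is precisely that one needs both $U_h$ and $U_{h+1}$ to be known to be walls before the strong properness of $\gl_h$ can be invoked.
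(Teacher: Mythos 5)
Your proof is correct and follows essentially the same route as the paper: invoke Theorem~\ref{th:francesi} to get linear partitions at every round, apply Proposition~\ref{prop:blocchi} to the bricklayers $\gamma_h$ and $\gamma_{h+1}$ (after conjugating away the mixing layer) to force $U_h$ and $U_{h+1}=U_h\gl_h$ to be walls, and contradict the strong properness of $\gl_h$. The only difference is cosmetic --- the paper derives ``$U_{h+1}$ is not a wall'' from strong properness and then contradicts Proposition~\ref{prop:blocchi} at round $h+1$, whereas you derive ``$U_{h+1}$ is a wall'' at round $h+1$ and contradict strong properness --- and your explicit bookkeeping of non-triviality of the intermediate partitions is a welcome detail the paper leaves implicit.
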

\begin{proof}
Suppose that the partition-based trapdoor is applicable for all $\ell$-tuples of round keys. From Theorem \ref{th:francesi} we have that there exist two linear partitions $\cL(U)$ and $\cL(W)$ such that $\gamma_h\gl_h$ maps $\cL(U)$ to $\cL(W)$. 
Thus $\gamma_h$ maps $\cL(U)$ to $\cL(W(\gl_h)^{-1})$. From Proposition \ref{prop:blocchi} we have that $U$ is a wall and the same for $W(\gl_h)^{-1}$. Now being $\gl_h$ strongly-proper we have that $W$ is not a wall. Then from Theorem \ref{th:francesi} we have that $\gamma_{h+1}$ maps $\cL(W)$ to another linear partition, but it is not possible since $W$ is not a wall and this contradicts Proposition \ref{prop:blocchi}.
\end{proof}

The result of Theorem \ref{th:main} can be generalized, using a weaker condition on the mixing-layers composing the round functions. Consider, for example, the mixing-layer of AES. This is not strongly-proper, as there exists a wall which is sent in another wall. Indeed, the state of AES can be represented as a $4 \times 4$ matrix of bytes (Table \ref{tb:1}). That is, $V=V_1\oplus...\oplus V_{16}$, where $V_i$ is isomorphic to $(\FF_2)^8$.

\begin{table}[h!]
\centering
\begin{tabular}{|c|c|c|c|}
\hline
$V_1$&$V_2$&$V_3$&$V_4$\\
\hline
$V_5$&$V_6$&$V_7$&$V_8$\\
\hline
$V_9$&$V_{10}$&$V_{11}$&$V_{12}$\\
\hline
$V_{13}$&$V_{14}$&$V_{15}$&$V_{16}$\\
\hline
\end{tabular}\caption{AES state}\label{tb:1}
\end{table}

The mixing-layer of AES is composed by two linear functions ShiftRow (SR) and MixColumn (MC).
The ShiftRows transformation acts in a way such that a wall is sent in an other wall, and in particular $V_1\oplus V_{6}\oplus V_{11}\oplus V_{16}$ is sent in $V_1\oplus V_{5}\oplus V_{9}\oplus V_{13}$.
\begin{table}[h]
\centering
\begin{tabular}{|c|c|c|c|}
\hline
\cellcolor{orange}$V_1$&$V_2$&$V_3$&$V_4$\\
\hline
$V_5$&\cellcolor{orange}$V_6$&$V_7$&$V_8$\\
\hline
$V_9$&$V_{10}$&$\cellcolor{orange}V_{11}$&$V_{12}$\\
\hline
$V_{13}$&$V_{14}$&$V_{15}$&\cellcolor{orange}$V_{16}$\\
\hline
\end{tabular}${SR\atop\mapsto}$
\begin{tabular}{|c|c|c|c|}
\hline
\cellcolor{orange}$V_1$&$V_2$&$V_3$&$V_4$\\
\hline
\cellcolor{orange}$V_5$&$V_6$&$V_7$&$V_8$\\
\hline
\cellcolor{orange}$V_9$&$V_{10}$&$V_{11}$&$V_{12}$\\
\hline
\cellcolor{orange}$V_{13}$&$V_{14}$&$V_{15}$&$V_{16}$\\
\hline
\end{tabular}
${MC\atop\mapsto}$
\begin{tabular}{|c|c|c|c|}
\hline
\cellcolor{orange}$V_1$&$V_2$&$V_3$&$V_4$\\
\hline
\cellcolor{orange}$V_5$&$V_6$&$V_7$&$V_8$\\
\hline
\cellcolor{orange}$V_9$&$V_{10}$&$V_{11}$&$V_{12}$\\
\hline
\cellcolor{orange}$V_{13}$&$V_{14}$&$V_{15}$&$V_{16}$\\
\hline
\end{tabular}\caption{AES wall}\label{tb:2}
\end{table}
Now, as MixColumn combines the blocks of a same column of the state, we have that $V_1\oplus V_{5}\oplus V_{9}\oplus V_{13}$ (which is the first column of the state) is sent in itself (Table~\ref{tb:2}).
However, the previous attack cannot be applied to AES as the mixing layer satisfies the following property.

\begin{definition}\label{def:stro}
Let $\gl_1,...,\gl_\ell$ be the mixing-layers used in a tb cipher with $\ell$ rounds. We say that the ordered family $\Lambda=(\gl_1,...,\gl_\ell)$ is strongly proper if for all possible non-trivial wall $W$ there exists $j< \ell$ such that $W\gl_1\cdot...\cdot \gl_j$ is not a wall.
\end{definition}

\begin{corollary}\label{cor:1}
Let $\cC$ be a tb cipher with $\Lambda=(\gl_1,...,\gl_\ell)$ strongly proper and for each round $h$ the parallel map $\gamma_h$ satisfies the conditions of Proposition \ref{prop:blocchi}. Then the partition-based trapdoor is not applicable.
\end{corollary}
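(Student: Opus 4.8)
The plan is to reduce Corollary~\ref{cor:1} to Theorem~\ref{th:main} via Theorem~\ref{th:francesi}, the only difference being that we now allow the ``obstruction round'' to sit anywhere in the cipher rather than at a fixed position $h$. First I would argue by contradiction: suppose there exist non-trivial partitions $\cA$ and $\cB$ such that every encryption function $\gt_k$ (over all $\ell$-tuples of round keys) maps $\cA$ to $\cB$. By Theorem~\ref{th:francesi}, setting $\cA_1=\cA$ and $\cA_{i+1}=\cA_i\gt_i$ with $\gt_i=\gamma_i\gl_i$, each $\cA_i$ is a \emph{linear} partition, say $\cA_i=\cL(U_i)$, and $\cA_{\ell+1}=\cB$.

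Next I would track the subspaces $U_i$ through the rounds. Since $\gamma_i$ maps $\cL(U_i)$ to the linear partition $\cL(U_{i+1}(\gl_i)^{-1})$ (using that $\gl_i$ is linear and Proposition~\ref{prop:blocchi}'s ``vice versa'' direction plus the mixing-layer proposition of \cite{bannier2016partition}), and since every $\gamma_i$ satisfies Conditions~1) and~2) of Proposition~\ref{prop:blocchi}, that proposition forces $U_i$ to be a wall with $U_{i+1}(\gl_i)^{-1}=U_i$, i.e. $U_{i+1}=U_i\gl_i$, for \emph{every} $i=1,\dots,\ell$. Iterating, $U_{j+1}=U_1\gl_1\cdots\gl_j$ for all $j$, and each of these is a wall. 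In particular, taking $W:=U_1$ (a non-trivial wall, since $\cA$ is a non-trivial partition and linear partitions by walls are the only non-trivial candidates produced here), we get that $W\gl_1\cdots\gl_j$ is a wall for every $j<\ell$. This directly contradicts the hypothesis that the family $\Lambda=(\gl_1,\dots,\gl_\ell)$ is strongly proper in the sense of Definition~\ref{def:stro}.

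The one point needing a little care, and the main (mild) obstacle, is the base case: one must check that $U_1$ is itself a genuine non-trivial wall, not merely a proper non-trivial subspace. This follows because already $\gamma_1$ maps $\cL(U_1)$ to a linear partition, so Proposition~\ref{prop:blocchi} applies to $\gamma_1$ and forces $U_1$ to be a wall (the alternative $U_1\in\{0,V\}$ is excluded since $\cA$ is non-trivial). After that, the induction $U_{i+1}=U_i\gl_i$ is immediate from Proposition~\ref{prop:blocchi} applied round by round, exactly as in the proof of Theorem~\ref{th:main}, and the contradiction with Definition~\ref{def:stro} closes the argument.

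\begin{proof}
Assume, for contradiction, that there exist non-trivial partitions $\cA$ and $\cB$ such that for all $\ell$-tuples of round keys the encryption function maps $\cA$ to $\cB$. By Theorem~\ref{th:francesi}, the partitions $\cA_1=\cA$, $\cA_{i+1}=\cA_i\gt_i$ (with $\gt_i=\gamma_i\gl_i$) and $\cA_{\ell+1}=\cB$ are all linear; write $\cA_i=\cL(U_i)$. For each $i$, since $\gl_i\in\GL(V)$ maps $\cL(U_{i+1}(\gl_i)^{-1})$ to $\cL(U_{i+1})$, the parallel map $\gamma_i$ maps $\cL(U_i)$ to $\cL(U_{i+1}(\gl_i)^{-1})$. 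As $\cA$ is non-trivial, $U_1\notin\{0,V\}$, so by Proposition~\ref{prop:blocchi} applied to $\gamma_1$ the subspace $U_1$ is a (non-trivial) wall and $U_2(\gl_1)^{-1}=U_1$. Inductively, applying Proposition~\ref{prop:blocchi} to each $\gamma_i$ gives that $U_i$ is a wall and $U_{i+1}=U_i\gl_i$ for all $i=1,\dots,\ell$; hence $U_1\gl_1\cdots\gl_j=U_{j+1}$ is a wall for every $j<\ell$. Taking $W=U_1$, this contradicts the strong properness of $\Lambda=(\gl_1,\dots,\gl_\ell)$ in Definition~\ref{def:stro}.
\end{proof}
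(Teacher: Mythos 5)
Your proof is correct and follows essentially the same route as the paper: invoke Theorem~\ref{th:francesi} to get linear partitions $\cL(U_i)$ at every round, apply Proposition~\ref{prop:blocchi} round by round to force each $U_i$ to be a wall with $U_{i+1}=U_i\gl_i$, and contradict the strong properness of the family $\Lambda$. Your write-up is actually slightly more careful than the paper's, which loosely attributes the ``$U_h$ is a wall'' step to Lemma~\ref{lm:lm1} and leaves the non-triviality of the $U_i$ implicit, whereas you correctly credit Proposition~\ref{prop:blocchi} and check the base case.
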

\begin{proof}
Suppose that the partition-based trapdoor is applicable. From Theorem \ref{th:francesi} we have that each round functions (without the translation with respect to the round key) $\gt_h=\gamma_h\gl_h$ maps a linear partition $\cL(U_h)$ into $\cL(U_{h+1})$. From Lemma \ref{lm:lm1} the space $U_h$ is a wall for all $1\le h\le \ell$. Then, being $\gamma_h$ a parallel map and $0\gamma_h=0$ we have $$U_{h+1}=U_{h}\gl_h=U_1\gl_1\cdot...\cdot \gl_h$$ for all $h$. 

Since $\Lambda$ is strongly proper, there exists $j<\ell$ such that $U_1\gl_1\cdot...\cdot \gl_j$ is not a wall, which contradicts the fact that $U_h$ is a wall for all $1\le h\le \ell$.
\end{proof}

It may seem that the previous corollary requires strong conditions on the mixing layers and on the S-boxes. We show that the requirement on the mixing layers is necessary.

\begin{proposition}
Let $\cC$ be a tb cipher, with $\Lambda=(\gl_1,...,\gl_\ell)$ not strongly proper. Then the partition trapdoor is applicable.
\end{proposition}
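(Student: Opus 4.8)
The plan is to prove the converse of Corollary \ref{cor:1} by explicitly constructing a partition-based trapdoor from the failure of the strong-properness condition. Since $\Lambda=(\gl_1,\dots,\gl_\ell)$ is not strongly proper, by Definition \ref{def:stro} there exists a non-trivial wall $W$ such that $W\gl_1\cdots\gl_j$ is a wall for \emph{every} $j<\ell$. Set $U_1=W$ and $U_{h+1}=U_h\gl_h=W\gl_1\cdots\gl_h$ for $1\le h\le \ell-1$; by hypothesis each $U_h$ with $1\le h\le \ell$ is a non-trivial wall (note $U_\ell = W\gl_1\cdots\gl_{\ell-1}$ uses $j=\ell-1<\ell$, which is covered). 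The idea is that the linear partitions $\cL(U_h)$ form the chain of partitions in Theorem \ref{th:francesi}, and that the round keys, being translations, do not disturb linear partitions.

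First I would verify that each individual round function $\gt_{k,h}=\gamma_h\gl_h\gs_{k_h}$ maps $\cL(U_h)$ to $\cL(U_{h+1})$, regardless of the key $k$. The key observations are: (i) since $\gamma_h$ is a bricklayer transformation and $U_h$ is a wall, $\gamma_h$ maps $\cL(U_h)$ to itself — this is exactly the ``vice versa'' direction already proved in Proposition \ref{prop:blocchi}; (ii) by the Proposition of \cite{bannier2016partition} recalled in the preliminaries, $\cL(U_h)\gl_h=\cL(U_h\gl_h)=\cL(U_{h+1})$; and (iii) by Harpes' Proposition, the translation $\gs_{k_h}\in T(V)$ maps the linear partition $\cL(U_{h+1})$ to itself. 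Composing, $\gt_{k,h}$ maps $\cL(U_h)$ to $\cL(U_{h+1})$.

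Then I would compose over all $\ell$ rounds: for any $\ell$-tuple of round keys $k=(k_1,\dots,k_\ell)$, the encryption function $\gt_k=\gt_{k,1}\cdots\gt_{k,\ell}$ maps $\cL(U_1)=\cL(W)$ to $\cL(U_{\ell+1})$, where $U_{\ell+1}=U_\ell\gl_\ell=W\gl_1\cdots\gl_\ell$. Setting $\cA=\cL(W)$ and $\cB=\cL(U_{\ell+1})$, both non-trivial linear partitions, we conclude that the encryption functions of $\cC$ map $\cA$ to $\cB$ for every choice of round keys, i.e.\ the partition-based trapdoor is applicable.

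The main subtlety — though not really an obstacle — is making sure the indexing is consistent: the strong-properness failure must be invoked to guarantee $U_h$ is a wall for all $h$ in the range $1\le h\le \ell$, and one must be careful that Definition \ref{def:stro} as stated only guarantees walls for $W\gl_1\cdots\gl_j$ with $j<\ell$, so that $U_{\ell+1}=W\gl_1\cdots\gl_\ell$ need not be a wall, but this is fine since $\cB$ is only required to be a non-trivial linear partition, not a wall-partition. A second point worth stating carefully is that strong-properness of $\Lambda$ is a statement about \emph{all} walls, so its negation yields \emph{one} bad wall $W$, which is exactly what is needed to build the trapdoor; no genericity in the S-boxes is used at all, which is the sense in which the corollary's hypothesis on the mixing layers is necessary.
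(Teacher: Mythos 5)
Your proof is correct and follows essentially the same route as the paper: take the wall $W$ witnessing the failure of strong-properness and push the linear partition $\cL(W)$ through the rounds, using that a bricklayer map sends a wall-coset $W'+v$ to $W'+v\gamma_h$, that linear maps send $\cL(U)$ to $\cL(U\gl)$, and that translations preserve linear partitions. You are in fact more careful than the paper's own (very terse) proof about the indexing point that $W\gl_1\cdots\gl_j$ is only guaranteed to be a wall for $j<\ell$, while the final partition $\cB=\cL(W\gl_1\cdots\gl_\ell)$ need only be a non-trivial linear partition.
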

\begin{proof}
Because $\Lambda=(\gl_1,...,\gl_\ell)$ is not  strongly propers, then there exists $W$ a proper wall such that $W\gl_1\cdot...\cdot \gl_j$ is a wall for all $1\le j\le \ell$. Now being the $\gamma_h$ a parallel map we have that for all wall $W'$ and $v\in V$ $$(W'+v)\gamma_h=W'+v\gamma_h,$$ which concludes the proof.
\end{proof}
\section{Security proof for a cipher with independent round-keys}\label{sec:ind}
In this section we will discuss the fact that studying the group $\Gamma_\infty$ to understand the security of a block cipher $\cC$ could not be enough. \\

We report an example of block cipher $\cC$ whose components satisfy the cryptographic properties given in \cite{calderinisalerno} sufficient to thwart the trapdoor introduced by Paterson \cite{CGC-cry-art-paterson1}, but that is weak with respect to the partition-based trapdoor. 

Let $V=(\FF_2)^{mb}$, $V_i=(\FF_2)^m$, with $m,b\ge 1$ and $\gamma_i\in \Sym(V_i)$ be the inverse permutation for all $0\le i\le b-1$, i,e. $\gamma_i:x\mapsto x^{2^m-2}$ (using the representation as univariate polynomial). Consider the following mixing-layer 
$$
\gl=\left[\begin{array}{ccccc}
0 & I_{m\times m} & 0 &...& 0\\
0& 0 & I_{m\times m} &...& 0\\
\vdots & & & &\vdots\\
0 & &...& &  I_{m\times m}\\
 I_{m\times m} & &...& & 0\end{array}
\right]
$$
where $ I_{m\times m} $ is the identity matrix of size $m\times m$. It is easy to check that $\gl$ is a proper mixing-layer but not strongly proper. 

Consider now the linear partitions $\cL(V_1),...,\cL(V_b)$, and suppose to have $\ell$ rounds where we use $\gl$ and $\gamma$ in each one. From \cite[Theorem 3.1]{calderinisalerno} $\Gamma_\infty(\cC)$ is primitive, but each encryption function maps $\cL(V_i)$ to $\cL(V_{\gs^\ell (i)})$, where $\gs$ is the permutation of $\{1,...,b\}$ such that $\gs:i\mapsto i+1$ for all $i<b$ and $\gs:b\mapsto 1$.\\
Obviously, the mixing-layer $\gl$ is not interesting from a cryptographic point of view. We use this only as an example to show that if $\Gamma_\infty(\cC)$ is primitive this does not guarantee security on $\cC$. 
Moreover, if we use a number $\ell$ of rounds such that $\cL(V_i)=\cL(V_{\gs^\ell (i)})$ for some $i$, then $\Gamma_\infty(\cC)$ is primitive and the group $\Gamma(\cC)$ is imprimitive.

As we pointed out in Section 2, it would be interesting to study the group $\Gamma(\cC)$. However, this group depends, strongly, on the key-schedule used to create the round-keys. For this reason, usually, we study the properties of the group $\Gamma_\infty(\cC)$ (see for instance \cite{GOST,even,Wernsdorf2,Wernsdorfserpent,Wernsdorfaes}). But, as we showed above, we could have that even if the group $\Gamma_\infty(\cC)$ is considered secure, the group $\Gamma(\cC)$ may be not secure with respect to the trapdoors considered here. Then, we think that it is better to consider, and study, the group of a cipher $\cC$ obtained using independent round-keys, that is,
$$
\Gamma_{ind}(\cC)=\langle \gt_{K}\mid K\in V^\ell\rangle,
$$
where, letting $K=(k_1,\dots,k_\ell)$, $\gt_K$ is the encryption function obtained using $k_h\in V$ as round-key at round $h$.
Clearly, $\Gamma_{ind}(\cC)$ is such that $$\Gamma(\cC)\subseteq\Gamma_{ind}(\cC)\subseteq \Gamma_\infty(\cC).$$

We can summarize the results obtained in this work for $\Gamma_{ind}(\cC)$ in the following:
\begin{theorem}\label{th:main1}
Let $\cC$ be a tb cipher. Suppose that one of the following properties is satisfied:
\begin{enumerate}
\item there exists a round $h$ which is a strongly proper round and the brick-layer transformations $\gamma_h$, $\gamma_{h+1}$ satisfy Condition 1) and 2) of Proposition \ref{prop:blocchi},
\item[] \begin{center} or \end{center}
\item the family $\Lambda=(\gl_1,...,\gl_\ell)$ is strongly proper and for each round $h$ the parallel map $\gamma_h$ satisfies Condition 1) and 2) of Proposition \ref{prop:blocchi}.
\end{enumerate}
Then the partition-based trapdoor is not applicable. Moreover, $\Gamma_{ind}(\cC)$ is primitive.
\end{theorem}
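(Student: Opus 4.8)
The plan is to split the proof of Theorem~\ref{th:main1} into its two separate claims: first that the partition-based trapdoor is not applicable, and then that $\Gamma_{ind}(\cC)$ is primitive. The first claim is essentially immediate from the earlier work: under hypothesis (1) it is exactly Theorem~\ref{th:main}, and under hypothesis (2) it is exactly Corollary~\ref{cor:1}, so there is nothing new to do there beyond citing those results.

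The substantive part is deducing primitivity of $\Gamma_{ind}(\cC)$ from the absence of the trapdoor. First I would recall that, by Remark~\ref{rm:prim}, if $\Gamma_{ind}(\cC)$ were imprimitive then there would exist a non-trivial partition $\cA$ of $V$ with $\cA\gr=\cA$ for every $\gr\in\Gamma_{ind}(\cC)$. In particular $\cA$ is fixed by every generator $\gt_K$ with $K=(k_1,\dots,k_\ell)\in V^\ell$, so each such $\gt_K$ maps $\cA$ to $\cB:=\cA$. Taking $\cB=\cA$, the hypothesis of Theorem~\ref{th:francesi} is satisfied: there exist non-trivial partitions $\cA$ and $\cB$ such that for all $\ell$-tuples of round-keys the encryption function $\gt_K$ maps $\cA$ to $\cB$. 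But Theorems~\ref{th:main} and \ref{cor:1} tell us precisely that no such pair of partitions can exist under hypothesis (1) or (2) respectively. This contradiction shows $\Gamma_{ind}(\cC)$ is primitive.

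One point I would be careful about is the translation subgroup: among the generators $\gt_K$ are, in particular (using a proper round and surjectivity of $k\mapsto k_h$, together with the freedom to choose the remaining round-keys), enough maps to force the partition $\cA$ to be linear — but this is already absorbed into Theorem~\ref{th:francesi}, whose conclusion is that each $\cA_i$ is a linear partition, so I do not need to re-derive it. The only genuine content is the observation that ``$\Gamma_{ind}(\cC)$ fixes a non-trivial partition'' is a special case ($\cB=\cA$) of ``the trapdoor with respect to $(\cA,\cB)$ is implementable for all round-key tuples,'' which is exactly what the previous theorems rule out.

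I do not expect a real obstacle here; the proof is a short assembly of Theorem~\ref{th:main}, Corollary~\ref{cor:1}, Theorem~\ref{th:francesi}, and Remark~\ref{rm:prim}. If anything, the one place to state carefully is that imprimitivity of $\Gamma_{ind}(\cC)$ gives a partition fixed setwise by \emph{every} element of the group and hence by every generator $\gt_K$ — so that the quantifier ``for all $\ell$-tuples of round-keys'' in the hypothesis of Theorem~\ref{th:francesi} is met — rather than merely by the round functions $\gamma_h\gl_h$ individually. Thus the complete argument is: assume $\Gamma_{ind}(\cC)$ imprimitive; produce the fixed non-trivial partition $\cA$ and set $\cB=\cA$; invoke Theorem~\ref{th:francesi} to get linear partitions $\cL(U_i)$ with $\cL(U_1)=\cA$, $\cL(U_{\ell+1})=\cB=\cA$ and each round function mapping one to the next; and finally contradict this with Theorem~\ref{th:main} (case 1) or Corollary~\ref{cor:1} (case 2), which assert exactly that such a chain of linear partitions respected by the round functions cannot exist.
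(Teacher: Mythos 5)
Your proposal is correct and follows essentially the same route as the paper: both reduce primitivity of $\Gamma_{ind}(\cC)$ to the observation that an invariant non-trivial partition is the special case $\cB=\cA$ of the trapdoor hypothesis, which Theorem~\ref{th:main} (resp.\ Corollary~\ref{cor:1}) excludes. Your extra care about the quantifier over all $\ell$-tuples of round-keys matching the generators of $\Gamma_{ind}(\cC)$ is a point the paper leaves implicit, but the argument is the same.
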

\begin{proof}
From Theorem \ref{th:main} (or Corollary \ref{cor:1}) we have that there do not exist two partitions $\cA$ and $\cB$ such that $\cA\gt_k=\cB$ for all $\gt_k\in\Gamma_{ind}(\cC)$. Moreover, the group $\Gamma_{ind}(\cC)$ is imprimitive if and only if  there exists a partition such that $\cA\gt_k=\cA$ for all $\gt_k\in\Gamma_{ind}(\cC)$ (see Remark \ref{rm:prim}), which is a particular case where the partition-based trapdoor is applicable.
\end{proof}

\section{Conclusions and final remarks}

An interesting open problem proposed by Paterson \cite{CGC-cry-art-paterson1} was to investigate if it is possible to construct a block cipher such that the group $\Gamma_\infty$ is primitive, but the resulting cipher is weak with respect to the imprimitive trapdoor. As we pointed out in Section \ref{sec:ind}, it may happen that the cipher is vulnerable even if the group generated by the round functions results to be primitive. For this reason, here, we studied the group $\Gamma_{ind}$ generated by the cipher with independent round keys.  We think that studying algebraic properties for this group could be more appropriate to investigate security properties of a cipher. In particular, we gave sufficient conditions on the components of a cipher $\cC$, so that the partition trapdoor given in \cite{bannier2016partition} cannot be applied to the encryption functions generating $\Gamma_{ind}(\cC)$. As a consequence, we obtained also the primitivity for the group $\Gamma_{ind}(\cC)$. 

Note that this type of trapdoor can be easily avoided. Indeed, as noted in \cite{calderinisalerno}, for an invertible vectorial Boolean function to be strongly $1$-anti-invariant is equivalent to have no linear components (i.e. the nonlinearity is greater than $0$). Such a property is usually requested by the S-boxes of a cipher to avoid linear cryptanalysis \cite{linear}. Moreover, to achieve a good \emph{diffusion} we need that the mixing layer satisfies the condition given in Definition \ref{def:stro}. Therefore, from Theorem \ref{th:main1}, sufficient conditions to thwart the partition trapdoor are:

\begin{itemize}

\item S-boxes with differential $4$-uniformity and nonlinearity different from $0$.

\item strongly proper mixing layers (or that satisfy the condition in Definition~\ref{def:stro}).

\end{itemize}
{
For well-know ciphers like AES, PRESENT and SERPENT we have that these two characteristics are satisfied.
}\\

{In the work \cite{bannier2017partition}, the authors 
give a method to construct the S-boxes of a block-cipher in order to let be possible the partition trapdoor (see Section 3 in  \cite{bannier2017partition}). Using their method they construct a cipher, with S-boxes defined over $\mathbb{F}_{2}^{10}$, which is weak with respect to their trapdoor. It is possible to check that such a cipher cannot be attacked using the classical linear and differential cryptanalysis. However, 
as the same author stated, the structure of their linear and differential tables is likely to betray the existence of a backdoor and can be used to find it.
For this reason they create an other attack perturbing the S-boxes of the cipher, in order to strengthen it. These new S-boxes ``behave'' similarly to their secret counterparts, that means the output of a perturbed S-box is with high probability equals to the output of the corresponding non-perturbed S-box.}

{The mixing layer used in this block cipher is similar to the AES mixing layer, in particular we can check that the condition in Definition \ref{def:stro} is satisfied. The perturbed S-boxes are at most differentially $40$-uniform (and thus $2^5$-uniform). So, if we would analize if the properties of Theorem \ref{th:main1} are satisfied we need to check if it is strongly $4$-anti-invariant. This computation could be possible, but quite long. So we could verify the resistance of the block cipher to the partition trapdoor in an other way.}

{In Theorem \ref{th:main1} we use the differential uniformity of the S-boxes to say that $\Im(\hat{\gamma}_u)$ is greaten than $2^{n}/2^r$. So, we could generalize the first condition in Proposition \ref{prop:blocchi} as follow.
\begin{itemize}
\item[(1')]  $\Im(\hat{\gamma}_u)>2^{n-r}$ for all nonzero $u\in \mathbb{F}_{2}^{n}$.
\end{itemize}
Note that this condition is pretty similar to the {\em weak differential uniformity} introduced in \cite{caranti2} and also studied in \cite{weak}.}

{Using this fact, we have, for the case of the perturbed S-boxes of the cipher, that $\Im(\hat{\gamma}_u)>2^{6}$. So, we can check if the the S-boxes are strongly $3$-anti-invariant. This check is much easier, indeed we need to check if vector subspaces of dimension at least $7$ are mapped onto an other vector subspace. Using the software MAGMA it is possible, now, to check that all the S-boxes of the cipher are strongly $3$-anti-invariant. Which implies that the cipher is secure with respect to the trapdoor. Then, using only the analysis reported in this work, we cannot detect the backdoor introduced in \cite{bannier2017partition}. However the high differential uniformity of the S-boxes could lead to some suspect, suggesting the existence of a backdoor.}

{So it is a very interesting open problem to understand which cryptographic properties can be useful to avoid also the attack with perturbed S-boxes.}\\

Another requested property for a cipher is that the group generated by the encryption functions is not ``small'' (see for instance \cite{Kaliski}). Usually, this property is investigated for the group of the round functions (\cite{GOST,even,Wernsdorf2}).
With an ad hoc proof, in \cite{Wernsdorfaes} and \cite{Wernsdorfserpent} Wernsdorf proved respectively that $\Gamma_{\infty}(AES) = \Alt(V )$ and $\Gamma_{\infty}(SERPENT) = \Alt(V )$, where $\Alt(V )$ is the alternating group. 
In \cite{calderinisalerno,caranti2} are given conditions for tb ciphers so that $\Gamma_\infty(\cC)$ is the symmetric (alternating) group. 

In the case of ciphers like AES and PRESENT, where the same S-box and mixing layer are used in each round, we have that $\Gamma_{ind}(\cC)$ is normal in $\Gamma_{\infty}(\cC)$ (see for instance \cite[Lemma 3.4]{GOST}), so if $\Gamma_{\infty}(\cC)$ is the symmetric (alternating) group we have $\Gamma_{ind}(\cC)=\Gamma_{\infty}(\cC)$. However, this is no more the case of ciphers like, e.g., SERPENT, where the used S-box depends on the round.
Then, it may happen that  the group of the round functions generates the symmetric (alternating) group, while the group generated by the encryption functions is not. 

{Moreover, the only condition that $\Gamma_\infty(\cC)$ is the symmetric (alternating) group does not guarantee that the cipher is secure with respect the trapdoor studied in this work. Indeed, if we consider the example of the cipher given in Section 4, adding at the last round a strongly-proper mixing layer we will obtain that $\Gamma_{\infty}(\cC)$ is the alternating group. This is implied by Theorem 4.7 in \cite{calderinisalerno}. However, as in Section 4, such a cipher can be broken using the partition based trapdoor. Another example of a weak cipher having $\Gamma_{\infty}(\cC)=\Sym(V)$ is given in \cite{mur}.}

Another interesting future research could be investigating assumptions on the components of a cipher $\cC$ which can guarantee that the group $\Gamma_{ind}(\cC)$ is the symmetric (alternating) group.

%
%




\medskip
Received xxxx 20xx; revised xxxx 20xx.
\medskip

\end{document}